\documentclass{amsart}

\usepackage{amsmath,amssymb}
\usepackage{amsthm}
\usepackage{amsrefs}
\usepackage{indentfirst}
\usepackage{mathrsfs}
\usepackage{graphicx}

\usepackage[dvipsnames]{xcolor}
\usepackage[hidelinks]{hyperref}
\hypersetup{colorlinks = true,
linkcolor = MidnightBlue, anchorcolor =red,
citecolor = ForestGreen,
filecolor = red,urlcolor = red,
            pdfauthor={Zexi Fan, Bowen Li, Jianfeng Lu}}
\usepackage[capitalise]{cleveref}

\usepackage[margin=1.3in]{geometry}

\theoremstyle{plain}
\newtheorem{theorem}{Theorem}
\newtheorem{lemma}{Lemma}

\newtheorem{coro}[theorem]{Corollary}

\theoremstyle{definition}

\DeclareMathOperator{\tr}{tr}

\newcommand{\thmref}[1]{Theorem~\ref{#1}}
\newcommand{\corref}[1]{Corollary~\ref{#1}}

\newcommand{\ud}{\,\mathrm{d}}

\newcommand{\RR}{\mathbb{R}}

\IfFileExists{mathabx.sty}%
  {\DeclareFontFamily{U}{mathx}{\hyphenchar\font45}%
   \DeclareFontShape{U}{mathx}{m}{n}{<->mathx10}{}%
   \DeclareSymbolFont{mathx}{U}{mathx}{m}{n}%
   \DeclareFontSubstitution{U}{mathx}{m}{n}%
   \DeclareMathAccent{\widebar}{0}{mathx}{"73}%
}{%
  \PackageWarning{mathabx}{%
    Package mathabx not available, therefore\MessageBreak substituting
    widebar with overline\MessageBreak }%
  \newcommand{\widebar}[1]{\overline{#1}}%
}

\newcommand{\mc}[1]{\mathcal{#1}}

\newcommand{\mf}[1]{\mathsf{#1}}

\newcommand{\norm}[1]{\lVert#1\rVert}

\newcommand{\average}[1]{\langle#1\rangle}

\newcommand{\ip}[2]{\langle #1,#2\rangle}

\title[Sharp hypocoercive convergence estimates for underdamped Langevin dynamics]{Sharp hypocoercive convergence estimates for underdamped Langevin dynamics via the modified $L^2$ method}
\thanks{This work was partially supported by the National Key R\&D Program of China grant number 2024YFA1016000 (B.L.) and by the National Science Foundation via grant DMS-2309378 (J.L.). We thank anonymous reviewers for helpful suggestions.}

\author{Zexi Fan}
\address{School of Mathematical Sciences, Peking University}
\email{2200010816@stu.pku.edu.cn}

\author{Bowen Li}
\address{Department of Mathematics, City University of Hong Kong}
\email{boweli4@cityu.edu.hk}

\author{Jianfeng Lu}
\address{Department of Mathematics, Department of Physics, and Department of Chemistry, Duke University}
\email{jianfeng@math.duke.edu}

\date{\today}

\subjclass[2020]{Primary 35B40, 47D06, 60J25, 35K65, 82C10}
\keywords{Hypocoercivity, modified $L^2$ method, underdamped Langevin dynamics, shifted corrector}

\begin{document}

\begin{abstract}
In this note, we consider the underdamped Langevin dynamics with invariant measure $\mu(\mathrm{d}x\,\mathrm{d}v) \propto e^{-U(x)-|v|^2/2}\,\mathrm{d}x\,\mathrm{d}v$.
Assume that the position marginal $\mu_x(\mathrm{d}x)\propto e^{-U(x)}\,\mathrm{d}x$ satisfies a Poincar\'{e} inequality with constant $m>0$, and that $\nabla^2 U\ge -K\,\mathrm{Id}$ for some $K\ge 0$. We revisit the modified $L^2$ method of Dolbeault--Mouhot--Schmeiser, employing a shifted corrector
\begin{equation*}
    \mc A_\alpha=(\alpha-\mc L_{\mathrm o})^{-1}(\mc L_a\Pi_v)^*,
  \qquad \alpha\ge0,
\end{equation*}
where $\mc{L}_{\mathrm{o}}=\Delta_x-\nabla U\cdot\nabla_x$ is the overdamped generator, $\mc{L}_a$ is the generator of the Hamiltonian flow, and $\Pi_v$ denotes averaging over the velocity variable. We establish an explicit hypocoercive $L^2$-convergence rate $\Lambda_{\alpha,\gamma}$ for every shift $\alpha\ge0$ and friction coefficient $\gamma>0$, and show that, for each fixed $\gamma$, the rate is maximized at $\alpha=0$. Optimizing further over $\gamma$ gives
\begin{equation*}
    \Lambda_{0,\gamma_*}  = \frac{\sqrt m}
  {2\left(2+\sqrt{2+\frac{2K}{m}}
  +\sqrt{6+\frac{2K}{m}}\right)}\,, \qquad \text{at} \quad   \gamma_*=\sqrt{6m+2K}.
\end{equation*}
For convex $U$, this recovers the optimal $\mathcal{O}(\sqrt m)$ rate.
\end{abstract}

\maketitle

\section{Introduction}

We study the underdamped Langevin dynamics for $(X_t, V_t) \in \mathbb{R}^d \times \mathbb{R}^d$:
\begin{subequations}\label{eq:langevin}
\begin{align}
  \mathrm{d}X_t &= V_t\,\mathrm{d}t, \\
  \mathrm{d}V_t &= -\nabla U(X_t)\,\mathrm{d}t - \gamma V_t\,\mathrm{d}t + \sqrt{2\gamma}\,\mathrm{d}W_t,
\end{align}
\end{subequations}
where $U\colon\mathbb{R}^d\to\mathbb{R}$ is a confining potential satisfying
$U(x)\to\infty$ as $|x|\to\infty$
and $\int_{\RR^d}e^{-U(x)}\ud x<\infty$. Throughout the paper, we assume that
$U\in C^2(\mathbb{R}^d)$ and, for some $K\geq 0$,
\begin{equation}\label{eq:hessian-lower}
  \nabla^2 U(x)\geq -K\,\mathrm{Id},\qquad x\in\mathbb{R}^d.
\end{equation}
Here $X_t$ and $V_t$ denote the position and velocity of the particle
(the mass is set to unity for notational convenience).
The drift term $-\gamma V_t\,\mathrm{d}t$ models viscous damping with friction
coefficient $\gamma>0$, while the noise term $\sqrt{2\gamma}\,\mathrm{d}W_t$,
driven by a $d$-dimensional Wiener process $W_t$, represents random forcing
from the thermal environment. The specific ratio of these two coefficients is
dictated by the fluctuation--dissipation relation, which ensures that the Gibbs
measure is preserved.

The density $\varrho(t,x,v)$ of the law of the Langevin dynamics~\eqref{eq:langevin}
satisfies the kinetic Fokker--Planck equation
\begin{equation}\label{eq:kineticfokkerplanck}
  \partial_t\varrho
  = \bigl(-v\cdot\nabla_x + \nabla_x U\cdot\nabla_v\bigr)\varrho
    + \gamma\bigl(\nabla_v\cdot(v\varrho)+\Delta_v\varrho\bigr).
\end{equation}
Introducing the Hamiltonian $H(x,v)=\tfrac{1}{2}|v|^2+U(x)$, one verifies
directly that the Gibbs density
\begin{equation}\label{eq:gibbs}
  \mu(x,v) = \frac{1}{Z}\,e^{-H(x,v)},
  \qquad
  Z = \int_{\RR^{2d}} e^{-H(x,v)}\ud x\ud v
\end{equation}
is a stationary solution. Under mild assumptions on $U$ (see~\cite{Pavliotis2014}), $\mu$ is in fact the
unique invariant measure of~\eqref{eq:langevin}; by a slight abuse of notation,
we also write $\mathrm{d}\mu = \mu(x,v)\,\mathrm{d}x\,\mathrm{d}v$ for this measure.
Since $H$ separates in $x$ and $v$, $\mu$ is a product measure:
$\mathrm{d}\mu(x,v)=\mathrm{d}\mu_x(x)\,\mathrm{d}\kappa(v)$, where
$\mathrm{d}\mu_x\propto e^{-U(x)}\,\mathrm{d}x$ is the position marginal and
$\mathrm{d}\kappa$ is the standard Gaussian on $\mathbb{R}^d$.

In this note, we only consider real functions. For a probability measure $\pi$, define the weighted $L^2$ inner product
\begin{equation}
  \langle f, g\rangle_{L^2(\pi)} = \int fg\,\mathrm{d}\pi,
\end{equation}
and let $L^2(\pi)$ denote the associated Hilbert space. We write $L^2_0(\pi) = \{f \in L^2(\pi) \mid \int f\,\mathrm{d}\pi = 0\}$ for the closed subspace of mean-zero functions.

The ergodic properties of~\eqref{eq:langevin} are conveniently studied through the backward Kolmogorov equation:
\begin{equation}\label{eq:kolmogorov}
  \partial_t f = \mathcal{L}f
  := \bigl(v\cdot\nabla_x - \nabla_x U\cdot\nabla_v\bigr)f
     + \gamma\bigl(-v\cdot\nabla_v+\Delta_v\bigr)f.
\end{equation}
Here and throughout, we consider $\mc L$ as the closure in $L^2(\mu)$ of $(\mc{L}, C_c^\infty(\RR^{2d}))$, with domain denoted by $D(\mc L)$.  By \cite{GS16}*{Theorem~3.5}, $(\mc L, D(\mc L))$ is maximally dissipative and generates a strongly continuous contraction semigroup on $L^2(\mu)$.  Let $\mc L^\dagger$ denote the formal adjoint of $\mc L$ with respect to the Lebesgue measure; it is precisely the operator on the right-hand side of~\eqref{eq:kineticfokkerplanck}, so that the Fokker--Planck equation reads $\partial_t\varrho=\mc L^\dagger\varrho$.
The invariance of $\mu$ gives $\mc L^\dagger\mu=0$, and hence $\langle\mc L\phi,1\rangle_{L^2(\mu)}=0$ for $\phi\in D(\mc L)$. It follows that $1\in D(\mc L^*)$ and $\mc L^*1=0$, where $\mc L^*$ is the Hilbert-space adjoint of $\mc L$ in $L^2(\mu)$.
Hence the semigroup $\mc{P}_t = e^{t\mc L}$ preserves the mean and leaves $L^2_0(\mu)$ invariant. Its qualitative convergence to equilibrium in $L^2(\mu)$ is classical under standard confining assumptions: for every $f_0\in L^2(\mu)$ and $f(t) = \mc{P}_t f_0$, one has
\begin{equation*}
  \big\|\mc{P}_t f_0 -\int_{\RR^{2d}} f_0 \ud\mu\big\|_{L^2(\mu)} \to 0
  \qquad\text{as } t\to\infty.
\end{equation*}
See, for instance, \cites{Villani09}.
In this note, we aim to establish a quantitative decay rate with explicit dependence on the friction coefficient and the Poincar\'e constant (\cref{thm:main} and Corollary~\ref{cor:optimized-shift}).

The generator decomposes as
\begin{subequations}
    \begin{align}
    \mc{L} &= \mc{L}_a + \gamma \mc{L}_s,  \label{eq:Ldecomp1}\\
    \text{where} \quad
    \mc{L}_a &:= v \cdot \nabla_x - \nabla_x U \cdot \nabla_v, \quad \text{and} \quad
    \mc{L}_s := -v \cdot \nabla_v + \Delta_v.  \label{eq:Ldecomp2}
    \end{align}
\end{subequations}
Here $\mathcal{L}_a$ is the Liouville operator associated with the conservative
Hamiltonian flow, expressed via the Poisson bracket as
\begin{equation*}
      \mathcal{L}_a f = \{f, H\} := \nabla_v H\cdot\nabla_x f - \nabla_x H\cdot\nabla_v f.
\end{equation*}
The operator $\mc{L}_s$ models the fluctuation and dissipation induced by the environment. For $f,g\in C_c^\infty(\RR^{2d})$, one verifies that $\mc{L}_a$ is antisymmetric and $\mc{L}_s$ is symmetric in $L^2(\mu)$:
\begin{equation}\label{eq:symmetryantisymmetric}
    \average{f, \mc{L}_a g}_{L^2(\mu)} = - \average{\mc{L}_a f, g}_{L^2(\mu)} \qquad \text{and} \qquad \average{f, \mc{L}_s g}_{L^2(\mu)} = \average{\mc{L}_s f, g}_{L^2(\mu)}.
\end{equation}

Note that $\mc L_s$ has precisely the functions independent of $v$ in its kernel:
\begin{equation}\label{eq:kerLs}
    \ker(\mc{L}_s) = \left\{ f \in L^2(\mu) \;\big|\; \nabla_v f(x, v) = 0 \right\},
\end{equation}
and hence $\ker(\mc{L}_s)$ is much larger than $\ker(\mc{L})$. Because dissipation occurs only in the velocity variable, the full Langevin generator $\mc{L}$ is not coercive; however, it is hypocoercive (see \cite{Villani09} for the precise definition).
We assume that $\mu_x$ satisfies a Poincar\'e inequality with constant $m > 0$:
\begin{equation}\label{eq:poincare}
    \norm{g}_{L^2(\mu_x)}^2 \le \frac{1}{m}\norm{\nabla_x g}_{L^2(\mu_x)}^2
    \qquad\text{for all } g \in L^2_0(\mu_x) \text{ with } \nabla_x g \in L^2(\mu_x).
\end{equation}

Our main result is the following explicit hypocoercive $L^2$-decay estimate for the underdamped Langevin dynamics~\eqref{eq:langevin}. The proof is given in Section~\ref{sec:proof}.

\begin{theorem}\label{thm:main}
Suppose that $U \in C^2(\mathbb{R}^d)$ with $U(x) \to \infty$ as $|x| \to \infty$ and $\int_{\RR^d}e^{-U(x)}\ud x<\infty$.  Let $\mu(\mathrm{d}x\,\mathrm{d}v) \propto e^{-U(x)-|v|^2/2}\,\mathrm{d}x\,\mathrm{d}v$ be the Gibbs measure on $\mathbb{R}^{2d}$. Assume:
\begin{enumerate}
  \item[\textup{(i)}] the position marginal $\mu_x$ satisfies the
  Poincar\'e inequality~\eqref{eq:poincare} with constant $m > 0$;
  \item[\textup{(ii)}] the Hessian lower bound~\eqref{eq:hessian-lower} holds
  with $K \ge 0$.
\end{enumerate}
Let $f(t) = e^{t\mathcal{L}}f_0$ be the solution to~\eqref{eq:kolmogorov}
with $f_0 \in L^2_0(\mu)$. For any fixed friction coefficient $\gamma > 0$ and shift $\alpha\ge0$,
the following decay estimate holds for all $t \ge 0$:
\begin{equation}\label{eq:general-decay}
 \|f(t)\|_{L^2(\mu)}
  \le
  \left(\frac{1+q_{\alpha,\gamma}}{1-q_{\alpha,\gamma}}\right)^{1/2}
  \exp\!\left(-\Lambda_{\alpha,\gamma}t\right)
  \|f_0\|_{L^2(\mu)},
\end{equation}
where the factor $q_{\alpha,\gamma}$ and the decay rate $\Lambda_{\alpha,\gamma}$ are given by
\begin{equation}\label{eq:q-lambda}
  q_{\alpha,\gamma}
  :=\frac{4s_\alpha\gamma c_\alpha}
  {4s_\alpha+(\gamma c_\alpha+C_\alpha)^2} < 1, \qquad
  \Lambda_{\alpha,\gamma}
  :=\frac{s_\alpha^2\gamma}
  {4s_\alpha+(\gamma c_\alpha+C_\alpha)^2
  +4s_\alpha\gamma c_\alpha}.
\end{equation}
Here the constants $s_\alpha$, $c_\alpha$, and $C_\alpha$ with $\alpha \ge 0$ are defined by
\begin{equation}\label{eq:main-constants}
 s_\alpha:=\frac{m}{m+\alpha},\qquad
  c_\alpha:=\sup_{\lambda\ge m}\frac{\sqrt{\lambda}}{\alpha+\lambda}
  =
  \begin{cases}
    \dfrac{\sqrt m}{m+\alpha}, & 0\le\alpha\le m,\\[6pt]
    \dfrac{1}{2\sqrt\alpha}, & \alpha\ge m,
  \end{cases}
  \qquad
  C_\alpha:=\bigl(2+2Kc_\alpha^2\bigr)^{1/2}.
\end{equation}
\end{theorem}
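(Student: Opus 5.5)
We use the DMS modified entropy $\mathcal H[f]=\tfrac12\|f\|^2+\varepsilon\langle \mc A_\alpha f,f\rangle$ with $\mc A_\alpha=(\alpha-\mc L_{\mathrm o})^{-1}(\mc L_a\Pi_v)^*$, where $\varepsilon>0$ is chosen later. Write $\Pi=\Pi_v$, $u=\Pi f$, and $\mc T=\mc L_a$. One checks that $\mc T\Pi f=v\cdot\nabla_x u$, that $\Pi\mc T\Pi=0$, and that $(\mc T\Pi)^*\mc T\Pi=-\mc L_{\mathrm o}$ on functions of $x$. Hence $\mc A_\alpha=(\alpha+(\mc T\Pi)^*\mc T\Pi)^{-1}(\mc T\Pi)^*$, and $\mc A_\alpha=\Pi\mc A_\alpha$.

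\textbf{Step 1 (equivalence of norms).} By the spectral calculus for $-\mc L_{\mathrm o}$ on $L^2_0(\mu_x)$, whose spectrum lies in $\{0\}\cup[m,\infty)$, we have $\|\mc A_\alpha f\|\le c_\alpha\|(1-\Pi)f\|$. Indeed, $\|(\alpha+\lambda)^{-1}\sqrt\lambda\|\le c_\alpha$ after restricting to mean zero. Therefore $|\langle\mc A_\alpha f,f\rangle|\le c_\alpha\|\Pi f\|\|(1-\Pi)f\|\le \tfrac{c_\alpha}{2}\|f\|^2$. Choosing $\varepsilon$ so that $q:=\varepsilon c_\alpha$ (up to a factor) satisfies $q<1$ gives $\frac{1-q}{2}\|f\|^2\le\mathcal H\le\frac{1+q}{2}\|f\|^2$. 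This produces the prefactor $\bigl(\frac{1+q}{1-q}\bigr)^{1/2}$.

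\textbf{Step 2 (dissipation).} We compute
\[
-\tfrac{d}{dt}\mathcal H=\gamma\|\nabla_v f\|^2+\varepsilon\langle \mc A_\alpha\mc T\Pi f,f\rangle+\varepsilon\langle\mc A_\alpha\mc T(1-\Pi)f,f\rangle-\varepsilon\langle\mc T\mc A_\alpha f,f\rangle-\varepsilon\gamma\langle\mc A_\alpha\mc L_s f,f\rangle ,
\]
using $\mc A_\alpha^*$-type identities in the standard DMS fashion. The Gaussian Poincaré inequality gives $\gamma\|\nabla_vf\|^2\ge\gamma\|(1-\Pi)f\|^2$. The main coercive term is $\langle\mc A_\alpha\mc T\Pi f,f\rangle=\langle(\alpha-\mc L_{\mathrm o})^{-1}(-\mc L_{\mathrm o})u,u\rangle\ge s_\alpha\|u\|^2$, which follows spectrally from $\lambda/(\alpha+\lambda)\ge m/(m+\alpha)$.

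Next, $\mc T\mc A_\alpha f$ is of the form $v\cdot\nabla_x w$ with $w=(\alpha-\mc L_{\mathrm o})^{-1}(\ldots)$, so $|\langle\mc T\mc A_\alpha f,(1-\Pi)f\rangle|\le\|\mc T\mc A_\alpha f\|\|(1-\Pi)f\|$. Moreover $\mc L_s\mc T\Pi=-\mc T\Pi$, so $\mc A_\alpha\mc L_s=-\mc A_\alpha$, and the friction term is bounded by $\gamma c_\alpha\|u\|\|(1-\Pi)f\|$.

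\textbf{Step 3 (main obstacle).} The remaining task is to bound $\|\mc A_\alpha\mc T(1-\Pi)f\|+\|\mc T\mc A_\alpha f\|\le C_\alpha\|(1-\Pi)f\|$. By duality this reduces to estimating $\|(\mc T\Pi)^*... \|$, i.e.\ controlling $\|\Pi\mc T^2\Pi^*\,g\|$ for $g=(\alpha-\mc L_{\mathrm o})^{-1}h$. Here $\Pi\mc T^2$ involves $\nabla_x^2 g$, so we need the elliptic regularity estimate
\[
\|\nabla_x^2 g\|^2\le\|\mc L_{\mathrm o}g\|^2+K\|\nabla_xg\|^2 ,
\]
which comes from Bochner's formula $\|\mc L_{\mathrm o} g\|^2=\|\nabla^2g\|^2+\langle\nabla^2U\nabla g,\nabla g\rangle$ together with the Hessian bound \eqref{eq:hessian-lower}. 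Combining this with the spectral bounds $\|\mc L_{\mathrm o}(\alpha-\mc L_{\mathrm o})^{-1}\|\le1$ and $\|\nabla(\alpha-\mc L_{\mathrm o})^{-1}\cdot\|\le c_\alpha$, and accounting for the Gaussian fourth-moment factor $\sqrt2$, yields $C_\alpha=(2+2Kc_\alpha^2)^{1/2}$. Justifying Bochner on the relevant domain (via a core and approximation) is the technical point; I expect this step to be the hardest.

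\textbf{Step 4 (optimization).} Writing $a=\|u\|$ and $b=\|(1-\Pi)f\|$, we obtain
\[
-\tfrac{d}{dt}\mathcal H\ge\gamma b^2+\varepsilon s_\alpha a^2-\varepsilon(\gamma c_\alpha+C_\alpha)ab .
\]
We take the minimal eigenvalue of this quadratic form, compare it with $\mathcal H\le\frac{1+q}{2}(a^2+b^2)$-type bounds (or with the mixed form), and choose $\varepsilon=4s_\alpha\gamma/(4s_\alpha+(\gamma c_\alpha+C_\alpha)^2)$. This should produce exactly $\Lambda_{\alpha,\gamma}$ and $q_{\alpha,\gamma}=\varepsilon c_\alpha$. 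Grönwall's inequality then gives \eqref{eq:general-decay}, and $q<1$ follows from $(\gamma c_\alpha-C_\alpha)^2\ge0$ together with $C_\alpha^2\ge2$.
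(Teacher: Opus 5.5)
Your route is the paper's: the same shifted corrector, the same slow/fast splitting, the coercive bound $\langle\mc A_\alpha\mc L_a\Pi_v f,f\rangle\ge s_\alpha\|f_S\|^2$, the identity $\mc A_\alpha\mc L_s=-\mc A_\alpha$ giving the $\gamma c_\alpha$ cross term, and the duality--Bochner--fourth-moment argument giving $\|\mc A_\alpha\mc L_a(1-\Pi_v)\|\le C_\alpha$.

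First, a sign slip. The DMS transport operator is $-\mc L_a$ here, so the functional must be $\frac12\|f\|^2-\varepsilon\langle\mc A_\alpha f,f\rangle$, as in \eqref{eq:defHalpha}. With your $+$ sign the coercive term enters $-\frac{\mathrm d}{\mathrm dt}\mathcal H$ negatively, which contradicts your Step~2 formula.

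The genuine gap is that the term $-\varepsilon\langle\mc L_a\mc A_\alpha f,f\rangle$, which you list in Step~2, has vanished from your quadratic form in Step~4. It is not a cross term. Since $\mc A_\alpha f=\mc A_\alpha f_F$ and $\mc L_a\mc A_\alpha f=v\cdot\nabla_x(\mc A_\alpha f)$ is linear in $v$, the term equals $-\varepsilon\langle\mc L_a\mc A_\alpha f_F,f_F\rangle$. With $S:=\mc L_a\Pi_v$ one has $\mc L_a\mc A_\alpha=S(\alpha+S^*S)^{-1}S^*$, a positive contraction; for $\alpha=0$ it is the orthogonal projection onto $\overline{\{v\cdot\nabla_x u\}}$. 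So the term is nonpositive, and it equals $-\varepsilon\|f_F\|^2$ when $\alpha=0$ and $f_F=v\cdot\nabla_x u$. The correct lower bound is
\begin{equation*}
  -\tfrac{\mathrm d}{\mathrm dt}\mathcal H\ge(\gamma-\varepsilon)b^2+\varepsilon s_\alpha a^2-\varepsilon Mab,\qquad M:=\gamma c_\alpha+C_\alpha .
\end{equation*}
Relatedly, your Step~3 inequality $\|\mc A_\alpha\mc L_a(1-\Pi_v)f\|+\|\mc L_a\mc A_\alpha f\|\le C_\alpha\|(1-\Pi_v)f\|$ is not what the argument yields. The two operators act on the second and first Hermite components of $f_F$ respectively. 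They are bounded by $C_\alpha$ and $1$ separately, so their sum is only bounded by $\sqrt{1+C_\alpha^2}\,\|f_F\|$.

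This breaks your choice of $\varepsilon$. The matrix $G_\varepsilon$ of the correct form has $\det G_\varepsilon=\frac{\varepsilon}{4}\bigl(4s_\alpha\gamma-\varepsilon(4s_\alpha+M^2)\bigr)$. Hence $\varepsilon=4s_\alpha\gamma/(4s_\alpha+M^2)$ is exactly the endpoint where the form degenerates, and the dissipation bound gives rate zero. You were led there by matching $q_{\alpha,\gamma}=\varepsilon c_\alpha$ through your sharper equivalence $|\langle\mc A_\alpha f,f\rangle|\le\frac{c_\alpha}{2}\|f\|^2$.

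The paper instead takes the midpoint $\varepsilon_*=2s_\alpha\gamma/(4s_\alpha+M^2)$ and uses $\lambda_{\min}\ge\det/\operatorname{tr}$:
\begin{equation*}
  \lambda_{\min}(G_{\varepsilon_*})\ge\frac{s_\alpha^2\gamma}{M^2+2s_\alpha+2s_\alpha^2}\ge\frac{s_\alpha^2\gamma}{M^2+4s_\alpha}=(1+q_{\alpha,\gamma})\Lambda_{\alpha,\gamma}.
\end{equation*}
Here $q_{\alpha,\gamma}=2c_\alpha\varepsilon_*$ comes from the cruder equivalence $|\langle\mc A_\alpha f,f\rangle|\le c_\alpha\|f\|^2$, and Gr\"onwall then gives \eqref{eq:general-decay} exactly. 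If you keep your sharper equivalence but use $\varepsilon_*$, you get $q=q_{\alpha,\gamma}/2$ and an estimate at least as strong as the theorem. Your argument for $q_{\alpha,\gamma}<1$ is fine.
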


\begin{coro}[Optimized convergence]\label{cor:optimized-shift}
Under the assumptions of \thmref{thm:main}, for every fixed $\gamma>0$, the rate $\Lambda_{\alpha,\gamma}$ in \eqref{eq:q-lambda} is strictly decreasing in $\alpha \ge 0$ and thus
\begin{equation*}
  \sup_{\alpha>0}\Lambda_{\alpha,\gamma} =\lim_{\alpha\downarrow0}\Lambda_{\alpha,\gamma} =\Lambda_{0,\gamma}.
\end{equation*}
In particular, when $\alpha = 0$, we have
\begin{equation}\label{eq:shift-optimized-rate}
  \Lambda_{0,\gamma}
  =
  \frac{\gamma}
  {6+\frac{2K}{m}
    +\frac{2\gamma}{\sqrt m}\left(2+\sqrt{2+\frac{2K}{m}}\right)
    +\frac{\gamma^2}{m}},
\end{equation}
which has the following scaling behaviors:
\begin{equation}\label{eq:shift-optimized-asymptotics}
  \Lambda_{0,\gamma}
  = \frac{\gamma}{6+\frac{2K}{m}} + \mathcal{O}(\gamma^2)
  \quad(\text{as }\ \gamma\to0),
  \qquad
  \Lambda_{0,\gamma}
  = \frac{m}{\gamma} + \mathcal{O}(\gamma^{-2})
  \quad(\text{as }\ \gamma\to\infty).
\end{equation}
Moreover, the rate $ \Lambda_{0,\gamma}$ is maximized at $\gamma_*=\sqrt{6m+2K}$, and
\begin{equation}\label{eq:fully-optimized-rate}
 \sup_{\gamma > 0} \Lambda_{0,\gamma} =  \Lambda_{0,\gamma_*} = \frac{\sqrt m} {2\left(2+\sqrt{2+\frac{2K}{m}}+\sqrt{6+\frac{2K}{m}}\right)}.
\end{equation}
\end{coro}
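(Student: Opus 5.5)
The corollary is a calculus exercise on the explicit formula \eqref{eq:q-lambda}; \thmref{thm:main} is taken as given. The plan has three steps: monotonicity in $\alpha$, specialization to $\alpha=0$, and optimization over $\gamma$.

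\textbf{Step 1: monotonicity in $\alpha$.} Divide the numerator and denominator of $\Lambda_{\alpha,\gamma}$ by $s_\alpha^2$:
\begin{equation*}
  \Lambda_{\alpha,\gamma}^{-1}=\frac{1}{\gamma}\Bigl(\frac{4}{s_\alpha}+\frac{(\gamma c_\alpha+C_\alpha)^2}{s_\alpha^2}+\frac{4\gamma c_\alpha}{s_\alpha}\Bigr).
\end{equation*}
It then suffices to show that each of the three terms is nondecreasing in $\alpha$, and that at least one is strictly increasing.
\begin{itemize}
  \item The term $1/s_\alpha=(m+\alpha)/m$ is strictly increasing.
  \item For $c_\alpha/s_\alpha$: on $[0,m]$ it equals $1/\sqrt m$, which is constant. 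On $[m,\infty)$ it equals $(m+\alpha)/(2m\sqrt\alpha)$, whose derivative has the sign of $\alpha-m\ge0$, so it is nondecreasing.
  \item For $C_\alpha/s_\alpha=\bigl(2/s_\alpha^2+2K(c_\alpha/s_\alpha)^2\bigr)^{1/2}$: this is nondecreasing as well, because $1/s_\alpha^2$ and $(c_\alpha/s_\alpha)^2$ both are.
\end{itemize}
Hence $(\gamma c_\alpha+C_\alpha)/s_\alpha$ is nondecreasing. Since the term $4/s_\alpha$ is strictly increasing, $\Lambda_{\alpha,\gamma}^{-1}$ is strictly increasing and $\Lambda_{\alpha,\gamma}$ is strictly decreasing. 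Continuity of $c_\alpha$ and $s_\alpha$ in $\alpha$ then gives $\sup_{\alpha>0}\Lambda_{\alpha,\gamma}=\lim_{\alpha\downarrow0}\Lambda_{\alpha,\gamma}=\Lambda_{0,\gamma}$.

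\textbf{Step 2: the formula at $\alpha=0$.} Substitute $s_0=1$, $c_0=1/\sqrt m$ and $C_0=\sqrt{2+2K/m}$. The denominator becomes
\begin{equation*}
  4+\bigl(\gamma/\sqrt m+C_0\bigr)^2+4\gamma/\sqrt m
  =4+C_0^2+\frac{2\gamma}{\sqrt m}(2+C_0)+\frac{\gamma^2}{m},
\end{equation*}
and $4+C_0^2=6+2K/m$, which is \eqref{eq:shift-optimized-rate}. The asymptotics \eqref{eq:shift-optimized-asymptotics} are read off by expanding $\gamma/(a+b\gamma+\gamma^2/m)$:
\begin{itemize}
  \item as $\gamma\to0$, it equals $\gamma/a+\mathcal O(\gamma^2)$;
  \item as $\gamma\to\infty$, it equals $(m/\gamma)\bigl(1+\mathcal O(1/\gamma)\bigr)$.
\end{itemize}

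\textbf{Step 3: optimization over $\gamma$.} Write $\Lambda_{0,\gamma}=1/g(\gamma)$ with
\begin{equation*}
  g(\gamma)=\frac{a}{\gamma}+b+\frac{\gamma}{m},\qquad a=6+\frac{2K}{m},\quad b=\frac{2}{\sqrt m}(2+C_0).
\end{equation*}
By AM--GM, $g$ is minimized at $\gamma_*=\sqrt{am}=\sqrt{6m+2K}$, with minimum value $2\sqrt{a/m}+b$. Simplifying then gives
\begin{equation*}
  \Lambda_{0,\gamma_*}=\frac{\sqrt m}{2\sqrt a+2(2+C_0)}=\frac{\sqrt m}{2\bigl(2+\sqrt{2+2K/m}+\sqrt{6+2K/m}\bigr)},
\end{equation*}
which is \eqref{eq:fully-optimized-rate}.

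The only step needing care is Step 1, specifically the piecewise definition of $c_\alpha$ when checking that $c_\alpha/s_\alpha$ does not decrease across $\alpha=m$.
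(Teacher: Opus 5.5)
Your proposal is correct and follows the paper's proof essentially step for step. Like the paper, you write $\gamma/\Lambda_{\alpha,\gamma}$ in terms of $1/s_\alpha$, $c_\alpha/s_\alpha$ and $C_\alpha/s_\alpha$, use the piecewise derivative on $\alpha\ge m$ to show monotonicity, substitute $s_0=1$, $c_0=1/\sqrt m$, $C_0=\sqrt{2+2K/m}$, and minimize $a/\gamma+b+\gamma/m$ at $\gamma_*=\sqrt{am}$; the continuity remark at $\alpha=0$, which you make explicit, is left implicit in the paper.
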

When $U$ is convex, i.e., $K=0$, \eqref{eq:shift-optimized-rate} becomes
\begin{equation} \label{eq:convexrate}
  \Lambda_{0,\gamma} = \frac{\gamma}
  {6+2(2+\sqrt2)\gamma/\sqrt m+\gamma^2/m}.
\end{equation}
In this case, the optimal friction and convergence rate are given by
\begin{equation*}
   \gamma_*=\sqrt{6m},
  \qquad
  \Lambda_{0,\gamma_*} = \frac{\sqrt m}{2(2+\sqrt2+\sqrt6)}.
\end{equation*}
Moreover, we have, from \eqref{eq:convexrate},
\begin{equation*}
   \Lambda_{0,\gamma}=\mathcal{O}(\gamma)\quad(\gamma\ll\sqrt m),
  \qquad
  \Lambda_{0,\gamma}=\mathcal{O}(\sqrt m)\quad(\gamma\asymp\sqrt m),
  \qquad
  \Lambda_{0,\gamma}=\mathcal{O}(m/\gamma)\quad(\gamma\gg\sqrt m).
\end{equation*}

The quantitative estimate in \thmref{thm:main} is not new: up to absolute constants, it agrees with that of~\cite{Cao2023}, which was obtained via a space--time Poincar\'{e} inequality and shown to be optimal in the sense discussed there. Specifically, we see from \eqref{eq:shift-optimized-rate} that, for any $\gamma > 0$,
\begin{equation*}
    \frac{1}{6}\,\frac{m\gamma}{(\gamma+\sqrt{m}+\sqrt{K})^2}
    \;\le\; \Lambda_{0,\gamma} \;\le\;
    \frac{m\gamma}{(\gamma+\sqrt{m}+\sqrt{K})^2},
\end{equation*}
which matches exactly the scaling in $m$, $K$, and $\gamma$ of \cite{Cao2023}*{(13)}, across the low-, critical-, and high-friction regimes. The purpose of this note is to show that the same optimal hypocoercive estimate can be recovered using the modified $L^2$ method~\cites{DMS09, DMS15}; to the best of our knowledge, this has not been recorded in the literature.

\subsection*{Related work} The modified $L^2$ hypocoercivity method was introduced by Dolbeault--Mouhot--Schmeiser~\cite{DMS09} for linear-relaxation models and subsequently developed into an abstract framework for mass-conserving linear kinetic equations in~\cite{DMS15}. The core idea traces back to the earlier work of H\'{e}rau~\cite{herau2006hypocoercivity}, dealing with the linear relaxation Boltzmann equation under stronger assumptions on the confining potential. For Langevin dynamics, Grothaus--Stilgenbauer~\cite{GS16} extended the DMS approach to give a rigorous $L^2$ hypocoercivity treatment of the backward Kolmogorov semigroup, addressing the relevant operator-domain issues and obtaining an explicit exponential convergence rate in the friction coefficient. Roussel--Stoltz~\cite{RS18} used hypocoercivity to analyze spectral Galerkin approximations of Poisson equations associated with the Langevin generator, while Bernard et al.~\cite{BFLS22} developed a Schur-complement approach to directly bound the inverse of hypocoercive generators, with applications to Langevin-type dynamics.

Building on the space--time variational approach to kinetic Fokker--Planck equations developed by Albritton et al.~\cite{AAMN24}, Cao--Lu--Wang~\cite{Cao2023} derived an explicit all-friction estimate for underdamped Langevin dynamics, and, for convex potentials satisfying~\eqref{eq:poincare} under regularity assumptions, obtained the optimal $\mathcal{O}(\sqrt{m})$ rate after optimizing the
friction coefficient. Brigati--Stoltz~\cite{BrigatiStoltz25} subsequently developed a fully constructive space--time Poincar\'e--Lions approach for kinetic Fokker--Planck equations with more general confining and kinetic energies. For Gaussian velocities, \cite{BrigatiStoltz25}*{Theorem~4} gives a lower bound on the time-averaged decay rate proportional to $\gamma/(1+\gamma^2)$,
with a prefactor depending on the potential $U$ and the time window. This scales as $\gamma$ at low friction and $\gamma^{-1}$ at high friction, matching our scalings \eqref{eq:shift-optimized-asymptotics}, though without isolating the dependence on $m$, $K$, and $\gamma$ in closed form. Brigati et al.~\cite{BrigatiStoltzWang24} treated weakly confining spatial potentials whose position marginals satisfy a weighted (rather than standard) Poincar\'e inequality, and allowed velocity measures satisfying only a weak Poincar\'e inequality.
In this case, they obtained explicit stretched-exponential or algebraic $L^2$ decay rates for bounded initial data.

More recently, second-order lifts have linked hypocoercivity to the diffusive-to-ballistic acceleration of reversible dynamics. Eberle--L\"orler~\cite{EberleLoerler24} proved that such quadratic acceleration is optimal and recast space--time hypocoercivity in the
language of lifts. Brigati--L\"orler--Wang~\cite{BrigatiLoerlerWang25} unified this viewpoint with variational hypocoercivity and recovered the $\sqrt{m}$ decay-rate scale for standard Langevin dynamics. Related
developments introduced flow Poincar\'e inequalities~\cite{EGHLM25} and extended lifting to quantum Markov semigroups~\cite{li2025speeding}. The present note shows that the optimal rate for the underdamped Langevin dynamics is also accessible within the DMS $L^2$ framework, without recourse to space--time Poincar\'{e} or lifting machinery.

\section{Hypocoercivity estimate \`a la Dolbeault--Mouhot--Schmeiser}\label{sec:dmsreview}

Denoting by $\nabla_x^*$ and $\nabla_v^*$ the formal adjoints of $\nabla_x$ and $\nabla_v$ in $L^2(\mu)$, we have
\begin{equation}
    \nabla_x^* = -\nabla_x + \nabla_x U, \qquad \nabla_v^* = -\nabla_v + v,
\end{equation}
so that $\mathcal{L}_s = -\nabla_v^*\nabla_v$. We also introduce the overdamped Langevin generator
\begin{equation} \label{def:overdampedlgv}
  \mc{L}_{\mathrm{o}} = -\nabla_x^*\nabla_x = \Delta_x - \nabla_x U\cdot\nabla_x.
\end{equation}
The operator $(\mc L_{\mathrm o},C_c^\infty(\RR^d))$ is essentially self-adjoint in $L^2(\mu_x)$ \cite{BKR97}; we denote its nonpositive self-adjoint closure by $\mc L_{\mathrm o}$. Its associated closed Dirichlet form is
\begin{equation*}
    \mathcal E_{\mathrm o}(f,g) := \langle\nabla_x f,\nabla_x g\rangle_{L^2(\mu_x)},
\end{equation*}
where
\begin{equation*}
  D(\mathcal E_{\mathrm o})
  :=\overline{C_c^\infty(\RR^d)}
  ^{\;\|\cdot\|_{L^2(\mu_x)}+
     \|\nabla_x\cdot\|_{L^2(\mu_x)}}
  =D\bigl((-\mc L_{\mathrm o})^{1/2}\bigr).
\end{equation*}
In particular, $\ker\mc L_{\mathrm o}$ consists of constant functions, and the Poincar\'e inequality~\eqref{eq:poincare} gives
\begin{equation}\label{eq:overdamped-gap}
  \operatorname{spec}
  \bigl(-\mc L_{\mathrm o}|_{L^2_0(\mu_x)}\bigr)\subset[m,\infty).
\end{equation}

We denote by $\Pi_v$ the orthogonal projection onto $\ker\mathcal{L}_s$, given by
\begin{equation} \label{def:piv}
  (\Pi_v f)(x) = \int_{\mathbb{R}^d} f(x,v)\,\mathrm{d}\kappa(v).
\end{equation}
For a function $f(x,v)$, one has that $\Pi_v f$ is independent of $v$ and $\mc L_a\Pi_vf=v\cdot\nabla_x\Pi_vf$. We can regard $\mc L_a\Pi_v$ as the densely defined closed operator with domain
\begin{equation*}
   D(\mc L_a\Pi_v) :=\bigl\{f\in L^2(\mu):\Pi_vf\text{ has a weak }x\text{-gradient in }  L^2(\mu_x)\bigr\},
\end{equation*}
since the weak gradient is closed and $\|v\cdot g(x)\|_{L^2(\mu)}  =\|g\|_{L^2(\mu_x)}$. Then, the antisymmetry of $\mc L_a$ and $\int v\ud\kappa(v) = 0$ give, for any $\phi\in C_c^\infty(\RR^{2d})$,
\begin{subequations}
\begin{align}
    &(\mathcal{L}_a\Pi_v)^* \phi = -\Pi_v\mathcal{L}_a \phi,  \label{eq:ops1} \\
    &  \Pi_v\mathcal{L}_a\Pi_v \phi  = 0\,. \label{eq:ops2}
\end{align}
\end{subequations}
Moreover, recalling~\eqref{def:overdampedlgv}, for $f,g \in C_c^\infty(\RR^d)$, we have
\begin{equation} \label{eq:liftstru}
    \langle f, -\mc{L}_{\mathrm{o}}\, g\rangle_{L^2(\mu_x)}
    = \langle \nabla_x f, \nabla_x g\rangle_{L^2(\mu_x)}
    = \langle \mathcal{L}_a f, \mathcal{L}_a g\rangle_{L^2(\mu)},
\end{equation}
where the last equality follows from $\int_{\RR^{d}} v_i v_j\,\mathrm{d}\kappa(v) = \delta_{ij}$. Here and throughout, we identify $f(x)$ with its trivial lift $f(x,v) := f(x)$. By closure, the identity \eqref{eq:liftstru} extends to
\begin{align*}
   \langle\mc L_a\Pi_vf,\mc L_a\Pi_vg\rangle_{L^2(\mu)} =\mathcal E_{\mathrm o}(\Pi_vf,\Pi_vg), \qquad f,g\in D(\mc L_a\Pi_v).
\end{align*}
Then, by uniqueness of the self-adjoint operator associated with a closed nonnegative form, we have the operator form of \eqref{eq:liftstru}:
\begin{equation}\label{eq:liftstru2}
  (\mc L_a\Pi_v)^*\mc L_a\Pi_v = - \mc L_{\mathrm o}\Pi_v.
\end{equation}

To motivate the modified $L^2$ functional of \cites{DMS09,DMS15}, we first examine the decay of the standard $L^2(\mu)$ norm along \eqref{eq:kolmogorov}:
\begin{equation}
\partial_t \Bigl(\frac{1}{2}\|f\|_{L^2(\mu)}^2\Bigr)
= \langle f, \mathcal{L}f\rangle_{L^2(\mu)}
= \gamma\langle f, \mathcal{L}_s f\rangle_{L^2(\mu)},
\end{equation}
where the last equality uses the antisymmetry of $\mathcal{L}_a$. Since the right-hand side vanishes for any $f\in\ker\mathcal{L}_s$, the $L^2(\mu)$ norm alone does not yield coercive decay.

To recover indirect dissipation from $\mc{L}_a$, the idea of Dolbeault--Mouhot--Schmeiser \cites{DMS09,DMS15} is to introduce a modified $L^2$ functional as a Lyapunov function:
\begin{equation*}
  \mf{L}(f) := \frac{1}{2}\|f\|_{L^2(\mu)}^2 - \varepsilon\langle \mathcal{A}f, f\rangle_{L^2(\mu)},
\end{equation*}
where the corrector $\mathcal{A}$ is given by
\begin{equation*}
  \mathcal{A} := (1 + (\mc{L}_a \Pi_v)^*\mc{L}_a \Pi_v)^{-1}(\mathcal{L}_a\Pi_v)^* = (1 - \mc{L}_{\mathrm{o}})^{-1}(\mathcal{L}_a\Pi_v)^*,
\end{equation*}
where the second equality uses $(\mc{L}_a\Pi_v)^*\mc{L}_a\Pi_v = -\mc{L}_{\mathrm{o}}\Pi_v$ (see~\eqref{eq:liftstru2} above) together with the fact that the range of $(\mc{L}_a\Pi_v)^*$ lies in $\operatorname{Ran}\Pi_v$. See~\cites{GS16,RS18,Cao2023} for adaptations of the DMS approach to the Langevin equation~\eqref{eq:langevin}. One can show that the Lyapunov function $\mf{L}(f)$ is equivalent to $\|f\|_{L^2(\mu)}^2$ and thus yields quantitative convergence bounds. However, as shown in~\cite{Cao2023}*{Proposition~B.2}, when $U$ is convex and the friction is in the critical regime $\gamma \sim \sqrt{m}$, this choice of corrector yields a convergence rate of only $\mathcal{O}(m^{5/2})$ as $m \to 0$, failing to achieve the optimal $\mathcal{O}(\sqrt{m})$ rate.

Our key observation is that a simple modification of $\mc A$ yields the optimal convergence-rate scaling within essentially the same framework. In general, for $\alpha \ge 0$, we define the shifted corrector, initially on
$D((\mc L_a\Pi_v)^*)\cap L^2_0(\mu)$, by
\begin{equation}\label{eq:defAalpha}
  \mc A_\alpha :=(\alpha-\mc L_{\mathrm o})^{-1}(\mc L_a\Pi_v)^*.
\end{equation}
Here, for $\alpha>0$, the operator $\alpha-\mc L_{\mathrm o}$ is strictly positive and has a bounded inverse, while for $\alpha=0$ the inverse is understood as the reduced inverse of $-\mc L_{\mathrm o}$ on $L^2_0(\mu_x)$.  Indeed, noting $\mc L_a\Pi_v=(\mc L_a\Pi_v)\Pi_v$ and $\mc L_a\Pi_v1=0$, taking adjoints shows
\begin{equation} \label{eq:relarange}
  \operatorname{Ran}(\mc L_a\Pi_v)^* \subset \operatorname{Ran}\Pi_v\cap L^2_0(\mu) \cong L^2_0(\mu_x),
\end{equation}
and then the spectral gap \eqref{eq:overdamped-gap} implies that the restriction of $-\mc L_{\mathrm o}$ to $L^2_0(\mu_x)$ is invertible, with bounded inverse. Thus, $\mc A_\alpha$ is well defined for all $\alpha \ge 0$. In particular, by \eqref{eq:liftstru2}, the polar decomposition of $(\mc L_a\Pi_v)^*$, and the spectral calculus, $\mc A_\alpha$ extends uniquely to a bounded operator on $L^2_0(\mu)$, and
\begin{equation}\label{eq:boundAalpha}
  \|\mc A_\alpha\|_{L^2_0(\mu)\to L^2_0(\mu)}
  =
  \|\mc A_\alpha\mc A_\alpha^*\|_
    {L^2_0(\mu)\to L^2_0(\mu)}^{1/2}
  \le
  \sup_{\lambda\ge m}\frac{\sqrt{\lambda}}{\alpha+\lambda}
  =c_\alpha<\infty;
\end{equation}
see \eqref{eq:main-constants} for the explicit expression for $c_\alpha$. Then, for $\varepsilon>0$ and $f\in L^2_0(\mu)$, define the corresponding modified DMS $L^2$-functional:
\begin{equation}\label{eq:defHalpha}
  \mf L_{\alpha,\varepsilon}(f)
  :=\frac12\|f\|_{L^2(\mu)}^2
  -\varepsilon\langle\mc A_\alpha f,f\rangle_{L^2(\mu)}.
\end{equation}
It is worth pointing out that variants of the DMS auxiliary operator with a positive resolvent shift have recently been employed in analyses of adaptive Langevin dynamics~\cites{LSS20,delande2025sharp} and velocity-jump processes~\cite{MRZ23}. In this work, we consider the family of correctors $\{\mc{A}_\alpha\}_{\alpha \ge 0}$ for the underdamped Langevin generator and optimize the convergence rate over $\alpha$.




The mechanism behind the shift may be easiest to see on the ``slow part'' $f_S := \Pi_v f \in \ker \mathcal{L}_s$. This component is not directly damped by $\mathcal{L}_s$, so the corrector must recover coercivity through the overdamped operator $-\mathcal{L}_{\mathrm{o}}$. As will be shown in \eqref{eq:basic-identities3} in \cref{sec:proof}, we have
\begin{equation*}
      \mathcal{A}_\alpha \mathcal{L}_a \Pi_v = (\alpha - \mathcal{L}_{\mathrm{o}})^{-1}(-\mathcal{L}_{\mathrm{o}})\Pi_v.
\end{equation*}
Thus, if $f_S$ is an eigenfunction of $-\mathcal{L}_{\mathrm{o}}$ with eigenvalue $\lambda$, the corrector produces a prefactor $\lambda/(\alpha + \lambda)$. Since the Poincar\'{e} inequality gives $\lambda \ge m$ on non-constant eigenfunctions, this factor is always at least $1/2$ whenever $\alpha \le m$: the shifted corrector then turns
even the slowest macroscopic mode into an order-one coercive contribution; see \eqref{eq:slow-coercive} for the precise estimate. By contrast, when $\alpha$ is an order-one constant independent of $m$, the same factor $\lambda/(\alpha + \lambda)$ is only of order $m$ on the slowest mode where $\lambda \sim m$. The macroscopic coercive contribution is then too weak, and one loses a power of $m$ in the final rate.

Compared with approaches based on a space--time Poincar\'{e} inequality \cites{AAMN24,Cao2023} and with later lifting-based approaches \cites{EberleLoerler24,BrigatiLoerlerWang25,EGHLM25,li2025speeding}, the present analysis is arguably simpler: it operates on a single time slice, rather than requiring integration over a time interval for the hypocoercive dissipation to accumulate.

\section{Proofs of \thmref{thm:main} and \corref{cor:optimized-shift}}\label{sec:proof}

We begin by recalling two standard estimates. First, integrating the Gaussian Poincar\'e inequality in the velocity
variable against $\mu_x$ gives
\begin{equation}\label{eq:gauss}
  \|(1-\Pi_v)f\|_{L^2(\mu)}^2
  \le \|\nabla_v f\|_{L^2(\mu)}^2,
\end{equation}
for every $f\in L^2(\mu)$ with $\nabla_v f\in L^2(\mu;\RR^d)$.
Second, the Bochner identity \cite{BGL14} gives

\begin{equation*}
    \frac12 \mc L_{\mathrm o} |\nabla_x h|^2 - \langle \nabla_x h,\nabla_x \mc L_{\mathrm o} h\rangle
= \|\nabla_x^2 h\|_{\mathrm{HS}}^2
+ \langle \nabla^2 U\,\nabla_x h,\nabla_x h\rangle\,, \qquad \text{for }\ h\in C_c^\infty(\RR^d).
\end{equation*}
Integrating this identity with respect to $\mu_x$ and using, from \eqref{eq:liftstru},
\begin{equation*}
    \int \mc L_{\mathrm o} |\nabla_x h|^2 \ud \mu_x=0,
\qquad
-\int \langle \nabla_x h,\nabla_x \mc L_{\mathrm o} h\rangle \ud \mu_x = \int (\mc L_{\mathrm o} h)^2 \ud \mu_x,
\end{equation*}
we obtain
\begin{equation}\label{eq:bochner-identity}
  \|\mathcal{L}_{\mathrm{o}} h\|_{L^2(\mu_x)}^2
  = \|\nabla_x^2 h\|_{L^2(\mu_x)}^2
    + \int_{\RR^{d}} \nabla^2 U\,\nabla_x h\cdot\nabla_x h\,\mathrm{d}\mu_x,
\end{equation}
which, combined with the Hessian lower bound \eqref{eq:hessian-lower}, yields
\begin{equation}\label{eq:bochner}
  \|\nabla_x^2 h\|_{L^2(\mu_x)}^2
  \le \|\mathcal{L}_{\mathrm{o}} h\|_{L^2(\mu_x)}^2 + K\|\nabla_x h\|_{L^2(\mu_x)}^2.
\end{equation}
Finally, by \eqref{eq:relarange}, the corrector $\mc A_\alpha$ defined in \eqref{eq:defAalpha} satisfies
\begin{subequations}\label{eq:basic-identities-alpha}
\begin{align}
  \mc A_\alpha &= \Pi_v\mc A_\alpha, \label{eq:basic-identities1}\\
  \mc A_\alpha\Pi_v &=0, \label{eq:basic-identities2}\\
  \mc A_\alpha\mc L_a\Pi_v
  &=(\alpha-\mc L_{\mathrm o})^{-1}(-\mc L_{\mathrm o})\Pi_v,
  \label{eq:basic-identities3}
\end{align}
\end{subequations}
where \eqref{eq:basic-identities2} follows from \eqref{eq:ops2}, while \eqref{eq:basic-identities3} follows from \eqref{eq:liftstru2}. More precisely, the first two identities hold on $L^2_0(\mu)$ because $\mc A_\alpha$ is bounded. The identity \eqref{eq:basic-identities3} is initially understood on $D(\mc L_a\Pi_v)\cap L^2_0(\mu)$, while it extends to $L^2_0(\mu)$ since its right-hand side defines a bounded operator on $L^2_0(\mu)$.

\subsection*{Bounds on the corrector}
The following lemma records the basic bounds on the operators $\mc{A}_\alpha$, $\mc{L}_a\mc{A}_\alpha$, and $\mc{A}_\alpha\mc{L}_a(1-\Pi_v)$.

\begin{lemma}\label{lem:A-bounds}
For $\alpha\ge0$, let $c_\alpha$ and $C_\alpha$ be the constants given in \eqref{eq:main-constants}. Then, for any $\phi\in C_c^\infty(\RR^{2d})\cap L^2_0(\mu)$, we have
\begin{align}
  \|\mc A_\alpha\phi\|_{L^2(\mu)}
  &\le c_\alpha \|\phi\|_{L^2(\mu)}, \label{eq:A-bound}\\
  \|\mc L_a\mc A_\alpha\phi\|_{L^2(\mu)}
  &\le \|\phi\|_{L^2(\mu)}, \label{eq:A1}\\
  \|\mc A_\alpha\mc L_a(1-\Pi_v)\phi\|_{L^2(\mu)}
  &\le C_\alpha\|(1-\Pi_v)\phi\|_{L^2(\mu)}. \label{eq:A2}
\end{align}
In particular, $\mathcal{A}_\alpha$, $\mathcal{L}_a\mathcal{A}_\alpha$, and $\mathcal{A}_\alpha\mathcal{L}_a(1-\Pi_v)$ extend uniquely to bounded operators on $L^2_0(\mu)$ with the same bounds.
Moreover, if $2\varepsilon c_\alpha<1$, then
\begin{equation}\label{eq:H-equiv}
  \frac{1-2\varepsilon c_\alpha}{2}\|\phi\|_{L^2(\mu)}^2
  \le \mf L_{\alpha,\varepsilon}(\phi) \le
  \frac{1+2\varepsilon c_\alpha}{2}\|\phi\|_{L^2(\mu)}^2
  \qquad \forall \phi\in L^2_0(\mu).
\end{equation}
\end{lemma}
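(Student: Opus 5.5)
The plan is to prove each estimate by duality together with the spectral calculus of $-\mc L_{\mathrm o}$ on $L^2_0(\mu_x)$, using \eqref{eq:liftstru2} and the spectral gap \eqref{eq:overdamped-gap}; \eqref{eq:A2} additionally needs the Bochner bound \eqref{eq:bochner}.

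\emph{Bound \eqref{eq:A-bound}.} This is essentially \eqref{eq:boundAalpha}. Write $B:=\mc L_a\Pi_v$, so that $\mc A_\alpha=(\alpha-\mc L_{\mathrm o})^{-1}B^*$. Then
\[
\mc A_\alpha\mc A_\alpha^*=(\alpha-\mc L_{\mathrm o})^{-1}B^*B(\alpha-\mc L_{\mathrm o})^{-1}
=(\alpha-\mc L_{\mathrm o})^{-2}(-\mc L_{\mathrm o})
\]
on $L^2_0(\mu_x)$. Its norm is $\sup_{\lambda\ge m}\lambda/(\alpha+\lambda)^2=c_\alpha^2$, which gives \eqref{eq:A-bound}.

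\emph{Bound \eqref{eq:A1}.} Since $\mc A_\alpha\phi\in\operatorname{Ran}\Pi_v$, we have $\mc L_a\mc A_\alpha\phi=B\mc A_\alpha\phi$. Hence
\[
\|\mc L_a\mc A_\alpha\phi\|^2=\langle -\mc L_{\mathrm o}\mc A_\alpha\phi,\mc A_\alpha\phi\rangle .
\]
Set $T:=B(\alpha-\mc L_{\mathrm o})^{-1}B^*$. Then $T^*T=B(\alpha-\mc L_{\mathrm o})^{-2}(-\mc L_{\mathrm o})B^*$. Using the polar decomposition $B=W(-\mc L_{\mathrm o})^{1/2}\Pi_v$, $T$ is unitarily equivalent to $(-\mc L_{\mathrm o})(\alpha-\mc L_{\mathrm o})^{-1}$. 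Its spectrum lies in $\{\lambda/(\alpha+\lambda)\}\subset[0,1]$, so $\|T\|\le1$.

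\emph{Bound \eqref{eq:A2}.} This is the main step. I would bound the adjoint. For $\psi\in L^2_0(\mu)$, let $h:=(\alpha-\mc L_{\mathrm o})^{-1}\Pi_v\psi\in L^2_0(\mu_x)$. Then
\[
(\mc A_\alpha\mc L_a(1-\Pi_v))^*\psi=-(1-\Pi_v)\mc L_a B h .
\]
Since $Bh=v\cdot\nabla_x h$,
\[
\mc L_a(v\cdot\nabla_x h)=v^{\top}\nabla_x^2h\,v-\nabla U\cdot\nabla_x h .
\]
Applying $(1-\Pi_v)$ removes the $\nabla U$ term and the trace, leaving $v^{\top}\nabla^2_xh\,v-\Delta_x h$. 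By Gaussian Wick calculus, $\|v^{\top}Mv-\tr M\|_{L^2(\kappa)}^2=2\|M\|_{\mathrm{HS}}^2$ for symmetric $M$. Therefore
\[
\|(\mc A_\alpha\mc L_a(1-\Pi_v))^*\psi\|^2=2\|\nabla_x^2h\|^2_{L^2(\mu_x)}\le 2\|\mc L_{\mathrm o}h\|^2+2K\|\nabla_xh\|^2
\]
by \eqref{eq:bochner}. By the spectral calculus, $\|\mc L_{\mathrm o}h\|\le\sup_\lambda \frac{\lambda}{\alpha+\lambda}\|\psi\|\le\|\psi\|$ and $\|\nabla_x h\|^2=\langle -\mc L_{\mathrm o}h,h\rangle\le c_\alpha^2\|\psi\|^2$. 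This gives the constant $C_\alpha^2=2+2Kc_\alpha^2$. Applying \eqref{eq:bochner} requires $h\in C_c^\infty$, so I would justify it by a density or core argument: the Bochner estimate extends to $D(\mc L_{\mathrm o})$ by essential self-adjointness on $C_c^\infty$ and closedness of $\nabla_x^2$. Making this rigorous is where I expect the main technical care to lie. Boundedness of the adjoint on the dense set then gives \eqref{eq:A2}, and the unique bounded extensions follow by density.

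\emph{Equivalence \eqref{eq:H-equiv}.} By Cauchy--Schwarz and \eqref{eq:A-bound}, $|\varepsilon\langle\mc A_\alpha\phi,\phi\rangle|\le\varepsilon c_\alpha\|\phi\|^2$. Inserting this into \eqref{eq:defHalpha} gives both inequalities, and they extend to all $\phi\in L^2_0(\mu)$ by boundedness.
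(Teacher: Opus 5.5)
Your proposal is correct and follows essentially the same route as the paper: \eqref{eq:A-bound} via $\mc A_\alpha\mc A_\alpha^*$ and the spectral gap; \eqref{eq:A1} via polar decomposition; and \eqref{eq:A2} by duality, using the explicit adjoint $-\sum_{i,j}(v_iv_j-\delta_{ij})\partial_{ij}h$, the Gaussian fourth-moment identity, the Bochner bound \eqref{eq:bochner} and the spectral calculus. Two points are sharpened in the paper: (i) in \eqref{eq:A1} it writes $\mc L_a\mc A_\alpha=g_\alpha(T^*T)$ with $T=(\mc L_a\Pi_v)^*$, which makes precise your ``unitary equivalence'' (valid only on $\overline{\operatorname{Ran}(\mc L_a\Pi_v)}$, with the operator vanishing on its orthogonal complement); (ii) for the technical point you flagged, it approximates $h$ in the graph norm of $\mc L_{\mathrm o}$ by mean-zero $C_c^\infty$ functions, uses \eqref{eq:bochner} to make the Hessians Cauchy, and invokes closedness of the adjoint, with the fact that the adjoint lands in $\operatorname{Ran}(1-\Pi_v)$ being what yields $\|(1-\Pi_v)\phi\|$ rather than $\|\phi\|$ on the right of \eqref{eq:A2}.
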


\begin{proof}
The first bound \eqref{eq:A-bound} is proved in \eqref{eq:boundAalpha}. For \eqref{eq:A1}, let $T:=(\mc L_a\Pi_v)^*$, regarded as a closed operator from $D(T) \subset L^2_0(\mu)$ into $\operatorname{Ran}\Pi_v\cap L^2_0(\mu)\cong L^2_0(\mu_x)$. By \eqref{eq:liftstru2}, $TT^*=-\mc L_{\mathrm o}$  on $L^2_0(\mu_x)$, with domain $D(\mc L_{\mathrm o})\cap L^2_0(\mu_x)$. Then, we have the bounded operator $\mc A_\alpha=(\alpha+TT^*)^{-1}T$ for $\alpha \ge 0$, recalling \eqref{eq:defAalpha}. For $f \in D(T)$, we have $\mc A_\alpha f \in D(TT^*)\subset D(T^*)$, and hence
\begin{equation*}
   \mc L_a\mc A_\alpha f =T^*(\alpha+TT^*)^{-1}Tf
\end{equation*}
is well defined. Let $T=V|T|$ with $|T|:=(T^*T)^{1/2}$ be the polar decomposition of $T$. Then, we can compute for $\alpha > 0$, on $(\ker T)^\perp$,
\begin{align*}
  T^*(\alpha+TT^*)^{-1}T
  &=
  |T|V^*(\alpha+TT^*)^{-1}V|T|\\
  &=
  |T|(\alpha+|T|^2)^{-1}|T|  = (T^*T)(\alpha+T^*T)^{-1},
\end{align*}
by the intertwining relation $(\alpha+TT^*)^{-1}V = V(\alpha+T^*T)^{-1}$. In particular, at $\alpha = 0$, the same polar-decomposition calculation gives
\begin{equation*}
    T^*(TT^*)^{-1}T  = V^*V = \mathbf 1_{(0,\infty)}(T^*T).
\end{equation*}
It follows that, for $\alpha \ge 0$, on $D(T)$,
\begin{equation*}
   \mc L_a\mc A_\alpha =  g_\alpha(T^*T),
  \qquad
  g_\alpha(\lambda):=
  \begin{cases}
    \dfrac{\lambda}{\alpha+\lambda}, & \alpha>0,\\[6pt]
    \mathbf 1_{(0,\infty)}(\lambda), & \alpha=0.
  \end{cases}
\end{equation*}
Since $0\le g_\alpha\le1$, this operator extends to a positive contraction on $L^2_0(\mu)$. This proves \eqref{eq:A1}.

To prove~\eqref{eq:A2}, let $B_\alpha=\mc A_\alpha\mc L_a(1-\Pi_v)$ for simplicity, with dense domain $ D(B_\alpha):=C_c^\infty(\RR^{2d})\cap L^2_0(\mu)$. Since $\operatorname{Ran}\mc A_\alpha\subset \operatorname{Ran}\Pi_v\cap L^2_0(\mu)$, the same inclusion holds for $\operatorname{Ran}B_\alpha$. By duality, it suffices to prove that any $g\in\operatorname{Ran}\Pi_v\cap L^2_0(\mu)$ belongs to $D(B_\alpha^*)$, satisfies $\|B_\alpha^*g\|_{L^2(\mu)}\le C_\alpha\|g\|_{L^2(\mu)}$, and has $B_\alpha^*g\in\operatorname{Ran}(1-\Pi_v)$.

Let
\begin{equation} \label{auxeq:b}
    h=(\alpha-\mc L_{\mathrm o})^{-1}g
    \in D(\mc L_{\mathrm o})\cap L^2_0(\mu_x),
\end{equation}
and choose $\widetilde h_n\in C_c^\infty(\RR^d)$ converging to $h$ in the graph norm of $\mc L_{\mathrm o}$. We also let $\chi_x\in C_c^\infty(\RR^d)$ with $\int\chi_x\ud\mu_x=1$ and
\begin{equation*}
    h_n:=\widetilde h_n -\left(\int\widetilde h_n\ud\mu_x\right)\chi_x.
\end{equation*}
It follows that $h_n\in C_c^\infty(\RR^d)\cap L^2_0(\mu_x)$ and
\begin{equation} \label{eq:convergraph}
   h_n\to h, \qquad
  \mc L_{\mathrm o}h_n\to\mc L_{\mathrm o}h \quad\text{in }L^2(\mu_x).
\end{equation}
We now introduce the associated $g_n$ by $g_n=(\alpha-\mc L_{\mathrm o})h_n$, which satisfies $g_n\to g$ in $L^2_0(\mu_x)$. Then, for $\phi\in C_c^\infty(\RR^{2d}) \cap L^2_0(\mu)$, integration by parts gives
\begin{align*}
  \langle B_\alpha\phi,g_n\rangle_{L^2(\mu)}
 =\langle \mc L_a(1-\Pi_v)\phi,\mc L_a h_n\rangle_{L^2(\mu)} = -\langle \phi,(1-\Pi_v)\mc L_a^2h_n\rangle_{L^2(\mu)}.
\end{align*}
Here we used $\mc A_\alpha^*g_n=\mc L_a h_n$ by \eqref{eq:defAalpha}. Hence, $g_n\in D(B_\alpha^*)$ and
\begin{equation*}
    B_\alpha^*g_n =-(1-\Pi_v)\mc L_a^2h_n  =-\sum_{i,j}(v_iv_j-\delta_{ij})\partial_{ij}h_n.
\end{equation*}
From \eqref{eq:bochner} and \eqref{eq:convergraph}, we have
\begin{equation*}
    \|\nabla_x^2 (h_n - h_k) \|_{L^2(\mu_x)}^2 \le \|\mc L_{\mathrm o} (h_n - h_k) \|_{L^2(\mu_x)}^2 + K \|\nabla_x (h_n - h_k) \|_{L^2(\mu_x)}^2 \to 0,
\end{equation*}
where $ \|\nabla_x (h_n - h_k) \|_{L^2(\mu_x)}^2 = \langle h_n - h_k,-\mc L_{\mathrm o} (h_n - h_k)\rangle_{L^2(\mu_x)}\to 0$. This shows that $B_\alpha^*g_n$ converges in $L^2(\mu)$. Since $B_\alpha^*$ is closed and $g_n\to g$, it follows that $g\in D(B_\alpha^*)$ and
\begin{equation*}
    B_\alpha^*g =-\sum_{i,j}(v_iv_j-\delta_{ij})\partial_{ij}h,
\end{equation*}
where $\nabla_x^2h$ denotes the $L^2(\mu_x)$ limit of $\nabla_x^2h_n$. In particular, $B_\alpha^*g\in\operatorname{Ran}(1-\Pi_v)$. From the Gaussian fourth-moment identity
\begin{equation*}
  \int_{\RR^d}(v_iv_j-\delta_{ij})(v_kv_l-\delta_{kl})\ud\kappa
  =\delta_{ik}\delta_{jl}+\delta_{il}\delta_{jk},
\end{equation*}
we then have
\begin{equation*}
    \|B_\alpha^*g\|_{L^2(\mu)}^2 \le2\|\nabla_x^2h\|_{L^2(\mu_x)}^2.
\end{equation*}
This, combined with the inequality  \eqref{eq:bochner} and the identity \eqref{eq:liftstru}, implies
\begin{equation*}
     \|B_\alpha^*g\|_{L^2(\mu)}^2
  \le 2\|\mathcal{L}_{\mathrm{o}} h\|_{L^2(\mu_x)}^2 + 2K \average{h, - \mc{L}_{\mathrm{o}} h}_{L^2(\mu_x)}.
\end{equation*}
Applying the spectral decomposition of $-\mathcal{L}_{\mathrm{o}}$ on
$L^2_0(\mu_x)$ together with \eqref{auxeq:b}, we find
\begin{align} \label{eq:bounda2}
  \|B_\alpha^*g\|_{L^2(\mu)}^2 \le
  \sup_{\lambda\ge m}  \frac{2\lambda^2+2K\lambda}{(\alpha+\lambda)^2}
  \|g\|_{L^2(\mu_x)}^2 \le \bigl(2+2Kc_\alpha^2\bigr)\|g\|_{L^2(\mu_x)}^2
  =C_\alpha^2\|g\|_{L^2(\mu_x)}^2,
\end{align}
where the second inequality is obtained by bounding the two terms in the numerator separately, using $\operatorname{spec}(-\mc{L}_{\mathrm{o}}|_{L^2_0(\mu_x)}) \subset [m,\infty)$ together with
\begin{equation*}
  \frac{\lambda^2}{(\alpha+\lambda)^2}\le1, \qquad \sup_{\lambda\ge m}\frac{\lambda}{(\alpha+\lambda)^2} =c_\alpha^2.
\end{equation*}
Together with $B_\alpha^*g\in\operatorname{Ran}(1-\Pi_v)$, the bound \eqref{eq:bounda2} above gives $\|B_\alpha\phi\|_{L^2(\mu)}\le C_\alpha\|(1-\Pi_v)\phi\|_{L^2(\mu)}$ for $\phi\in D(B_\alpha)$ by duality. The bound extends to $L^2_0(\mu)$ by density. We have completed the proof of \eqref{eq:A2}. Finally, \eqref{eq:H-equiv} follows from \eqref{eq:A-bound} and the Cauchy--Schwarz inequality.
\end{proof}

 \subsection*{Proof of \thmref{thm:main}} We fix $\alpha\ge0$ and $\gamma>0$, and let $s_\alpha,c_\alpha,C_\alpha$ be the constants in \eqref{eq:main-constants}. Let $f_0\in D(\mc L)\cap L^2_0(\mu)$ and $f(t)=e^{t\mc L}f_0$.  Then, with $D(\mc L)$ equipped with its graph norm, we have
\begin{equation*}
      f(t)\in C([0,\infty);D(\mc{L}))\cap C^1([0,\infty);L^2_0(\mu))\,.
\end{equation*}
Differentiating \eqref{eq:defHalpha} along $f(t)$ gives
\begin{equation}\label{eq:Hprime}
  \frac{\mathrm d}{\mathrm dt}\mf L_{\alpha,\varepsilon}(f(t))
  =-\mathcal D_{\alpha,\varepsilon}(f(t)),
\end{equation}
where the dissipation functional is, for $f \in D(\mathcal{L})\cap L^2_0(\mu)$,
\begin{equation}\label{eq:defD}
  \mathcal D_{\alpha,\varepsilon}(f)
  :=
  -\langle \mc Lf,f\rangle_{L^2(\mu)}
  +\varepsilon\Bigl(
      \langle \mc A_\alpha\mc Lf,f\rangle_{L^2(\mu)}
      +\langle \mc A_\alpha f,\mc Lf\rangle_{L^2(\mu)}
    \Bigr).
\end{equation}

We now estimate $\mathcal D_{\alpha,\varepsilon}(f)$ from below for $f\in C_c^\infty(\RR^{2d})\cap L^2_0(\mu)$.
We write
\begin{equation}\label{eq:decomfunc}
  f_S:=\Pi_v f,\qquad f_F:=(1-\Pi_v)f,
\end{equation}
where $\int_{\RR^d}f_S\ud\mu_x  =\int_{\RR^{2d}}f\ud\mu=0$ by $f\in L^2_0(\mu)$. The subscripts stand for slow and fast components: $f_S$ lies in the kernel of the velocity dissipation, while $f_F$ is controlled by the Gaussian Poincar\'e inequality in the velocity variable.
Since $\mc{L}_a$ is antisymmetric and $\mc{L}_s = -\nabla_v^*\nabla_v$, the uncorrected dissipation is
\begin{equation*}
   -\langle \mc Lf,f\rangle_{L^2(\mu)}
  =\gamma\|\nabla_v f\|_{L^2(\mu)}^2
  \ge \gamma\|f_F\|_{L^2(\mu)}^2.
\end{equation*}
It remains to control the $\varepsilon$-correction term in \eqref{eq:defD}, which we split into
its $\mathcal{L}_s$ and $\mathcal{L}_a$ contributions (dropping the overall factor $\varepsilon$):
\begin{align*}
   \gamma \Bigl(\ip{\mc{A}_\alpha\mc{L}_s f}{f}_{L^2(\mu)} + \ip{\mc{A}_\alpha f}{\mc{L}_s f}_{L^2(\mu)}\Bigr) + \Bigl(\ip{\mc{A}_\alpha \mc{L}_a f}{f}_{L^2(\mu)} + \ip{\mc{A}_\alpha f}{\mc{L}_a f}_{L^2(\mu)}\Bigr).
\end{align*}

For the $\mathcal{L}_s$ contribution, noting  $\mathcal{A}_\alpha = \Pi_v\mathcal{A}_\alpha$ by \eqref{eq:basic-identities1} and $\Pi_v\mathcal{L}_s = 0$, we have
\begin{equation*}
    \langle \mathcal{A}_\alpha f, \mathcal{L}_s f\rangle_{L^2(\mu)} = \langle \Pi_v\mathcal{A}_\alpha f, \mathcal{L}_s f\rangle_{L^2(\mu)} = \langle \mathcal{A}_\alpha f, \Pi_v\mathcal{L}_s f\rangle_{L^2(\mu)} = 0\,.
\end{equation*}
For the other term
$\langle \mathcal{A}_\alpha\mathcal{L}_s f, f\rangle_{L^2(\mu)}$, one can write, by \eqref{eq:basic-identities1} again with \eqref{eq:decomfunc},
\begin{equation} \label{auxeq:second}
    \langle \mathcal{A}_\alpha\mathcal{L}_s f, f\rangle_{L^2(\mu)} = \langle \mathcal{A}_\alpha\mathcal{L}_s f_F, f_S\rangle_{L^2(\mu)}.
\end{equation}
Moreover, note
\begin{equation*}
   \mc A_\alpha^* f = \mc L_a\Pi_v(\alpha-\mc L_{\mathrm o})^{-1}\Pi_v f = v\cdot\nabla_x(\alpha-\mc L_{\mathrm o})^{-1}\Pi_vf,
\end{equation*}
where the inverse is again understood on $L^2_0(\mu_x)$ when $\alpha=0$. We hence have
\begin{equation*}
    \mc L_s\mc A_\alpha^* f = -\mc A_\alpha^* f\,,
\end{equation*}
by $\mc{L}_s(v) = - v$. It then follows from \eqref{auxeq:second} that
\begin{equation*}
    \langle \mc A_\alpha\mc L_s f,f\rangle_{L^2(\mu)}
  =-\langle f_F,\mc A_\alpha^*f_S\rangle_{L^2(\mu)}.
\end{equation*}
The adjoint $\mc A_\alpha^*$ has the same operator norm as $\mc A_\alpha$, and therefore \eqref{eq:A-bound} gives
\begin{equation}\label{eq:fd-bound}
  |\langle \mc A_\alpha\mc L_s f,f\rangle_{L^2(\mu)}|
  \le c_\alpha\|f_F\|_{L^2(\mu)}\|f_S\|_{L^2(\mu)}.
\end{equation}

For the $\mc{L}_a$ contribution, since $\mc{A}_\alpha = \Pi_v\mc{A}_\alpha$ and $\mc{A}_\alpha\Pi_v = 0$
by~\eqref{eq:basic-identities1}--\eqref{eq:basic-identities2}, we have
\begin{align*}
     & \ip{\mc{A}_\alpha\mc{L}_a f}{f}_{L^2(\mu)}+\ip{\mc{A}_\alpha f}{\mc{L}_a f}_{L^2(\mu)} \\ =  &  \ip{\mc{A}_\alpha\mc{L}_a f}{\Pi_vf}_{L^2(\mu)} + \ip{\mc{A}_\alpha (1 - \Pi_v) f}{\Pi_v \mc{L}_a f}_{L^2(\mu)} \\
      = &  \ip{\mc{A}_\alpha\mc{L}_a f_S}{f_S}_{L^2(\mu)}
        + \ip{\mc{A}_\alpha\mc{L}_a f_F}{f_S}_{L^2(\mu)}
        + \ip{\mc{A}_\alpha f_F}{\mc{L}_a f_F}_{L^2(\mu)},
\end{align*}
where in the second equality we have used \eqref{eq:decomfunc} together with $\Pi_v\mc{L}_a f = \Pi_v\mc{L}_a f_F$ (which follows from \eqref{eq:ops2}). The first term is coercive: by~\eqref{eq:basic-identities3},
\begin{equation}\label{eq:slow-coercive}
  \begin{aligned}
  \langle \mc A_\alpha\mc L_a f_S,f_S\rangle_{L^2(\mu)}
 & =
  \langle(\alpha-\mc L_{\mathrm o})^{-1}(-\mc L_{\mathrm o})f_S,f_S\rangle_{L^2(\mu)}
 \\ & \ge
 \inf_{\lambda\ge m}\frac{\lambda}{\alpha+\lambda}
  \|f_S\|_{L^2(\mu)}^2
  = \frac{m}{m+\alpha}\|f_S\|_{L^2(\mu)}^2
  = s_\alpha\|f_S\|_{L^2(\mu)}^2,
  \end{aligned}
\end{equation}
by $f_S\in L^2_0(\mu_x)$ and $-\mc{L}_{\mathrm{o}}|_{L^2_0(\mu_x)}\ge m$.
The remaining two terms are controlled by Lemma~\ref{lem:A-bounds}:
\begin{equation*}
    \bigl|\ip{\mc{A}_\alpha\mc{L}_a f_F}{f_S}_{L^2(\mu)}\bigr|
  \le C_\alpha\,\norm{f_F}_{L^2(\mu)}\norm{f_S}_{L^2(\mu)}\,,\quad \bigl|\ip{\mc{A}_\alpha f_F}{\mc{L}_a f_F}_{L^2(\mu)}\bigr| \le \norm{f_F}_{L^2(\mu)}^2\,.
\end{equation*}
Therefore, we obtain
\begin{equation}\label{est:2ndtermb}
    \ip{\mc{A}_\alpha\mc{L}_a f}{f}_{L^2(\mu)} +\ip{\mc{A}_\alpha f}{\mc{L}_a f}_{L^2(\mu)} \ge s_\alpha\, \norm{f_S}_{L^2(\mu)}^2 -  C_\alpha\,\norm{f_F}_{L^2(\mu)}\norm{f_S}_{L^2(\mu)} - \norm{f_F}_{L^2(\mu)}^2\,.
\end{equation}

Combining \eqref{eq:defD} with the estimates \eqref{eq:fd-bound} and \eqref{est:2ndtermb}, we obtain
\begin{equation}\label{eq:quadratic-form}
 \mathcal D_{\alpha,\varepsilon}(f)  \ge (\gamma-\varepsilon)\|f_F\|_{L^2(\mu)}^2 +\varepsilon s_\alpha\|f_S\|_{L^2(\mu)}^2 -\varepsilon(\gamma c_\alpha+C_\alpha)\|f_F\|_{L^2(\mu)}\,\|f_S\|_{L^2(\mu)}.
\end{equation}
To obtain an explicit lower bound, we reformulate this inequality in matrix form. Letting
\begin{equation*}
    M:=\gamma c_\alpha+C_\alpha,
\end{equation*}
we obtain
\begin{equation}\label{eq:matrix}
 \mathcal D_{\alpha,\varepsilon}(f)
  \ge
  \begin{pmatrix}\|f_F\|_{L^2(\mu)} & \|f_S\|_{L^2(\mu)}\end{pmatrix}
 G_\varepsilon
  \begin{pmatrix}\|f_F\|_{L^2(\mu)}\\\|f_S\|_{L^2(\mu)}\end{pmatrix},
  \qquad
    G_\varepsilon
  :=
  \begin{pmatrix}
    \gamma-\varepsilon & -\varepsilon M/2\\
    -\varepsilon M/2 & \varepsilon s_\alpha
  \end{pmatrix}.
\end{equation}

We next derive an explicit decay rate by bounding the smallest eigenvalue of
$G_\varepsilon$ from below. First, by Sylvester's criterion, $G_\varepsilon$ is positive definite if and only if both leading principal minors are positive: $\gamma - \varepsilon > 0$, and
\begin{equation*}
   \det G_\varepsilon  = \frac{\varepsilon}{4}  \Bigl(4s_\alpha\gamma  -\varepsilon(4s_\alpha+M^2)\Bigr) > 0\,,
\end{equation*}
where the latter condition is equivalent to
\begin{equation} \label{eqcond1}
    0<\varepsilon< \frac{4s_\alpha\gamma}{4s_\alpha+M^2}\,,
\end{equation}
which implies $\gamma - \varepsilon > 0$. That is, $G_\varepsilon$ is positive definite if and only if \eqref{eqcond1} holds. Noting that $\lambda_{\min}(G)\ge \det G/\operatorname{tr}G$ for any positive-definite $2\times2$ matrix $G$, we have
\begin{equation} \label{auxeqeigG}
    \mathcal D_{\alpha,\varepsilon}(f) \ge \lambda_{\min}(G_\varepsilon)  \|f\|_{L^2(\mu)}^2 \ge \frac{\det G_\varepsilon}{\tr G_\varepsilon}  \|f\|_{L^2(\mu)}^2\,.
\end{equation}
We choose the midpoint $\varepsilon_*$ of the interval \eqref{eqcond1}, i.e.,
\begin{equation*}
   \varepsilon_* := \frac{2s_\alpha\gamma}{4s_\alpha+M^2}.
\end{equation*}
Then, we have, by $0<s_\alpha\le1$,
\begin{equation*}
   \lambda_{\min}(G_{\varepsilon_*})
  \ge
  \frac{\det G_{\varepsilon_*}}
       {\operatorname{tr}G_{\varepsilon_*}}
  =
  \frac{s_\alpha^2\gamma}
       {M^2+2s_\alpha+2s_\alpha^2}
  \ge
  \frac{s_\alpha^2\gamma}
       {M^2+4s_\alpha}
  =(1+q_{\alpha,\gamma})\Lambda_{\alpha,\gamma},
\end{equation*}
with
\begin{equation*}
   q_{\alpha,\gamma}
  =2c_\alpha\varepsilon_*
  =
  \frac{4s_\alpha\gamma c_\alpha}
  {4s_\alpha+(\gamma c_\alpha+C_\alpha)^2}\,, \quad \Lambda_{\alpha,\gamma}
=\frac{\varepsilon_*s_\alpha}{2(1+q_{\alpha,\gamma})}\,.
\end{equation*}
It follows from \eqref{auxeqeigG} that, for $f\in C_c^\infty(\RR^{2d})\cap L^2_0(\mu)$,
\begin{equation}\label{eq:core-diss}
  \mathcal D_{\alpha,\varepsilon_*}(f) \ge (1+q_{\alpha,\gamma})\Lambda_{\alpha,\gamma}
  \|f\|_{L^2(\mu)}^2.
\end{equation}
We next extend \eqref{eq:core-diss} to every $f\in D(\mc L)\cap L^2_0(\mu)$.  Since $\mc A_\alpha$ is bounded by Lemma~\ref{lem:A-bounds}, the right-hand side of \eqref{eq:defD} is continuous in the graph norm of $\mc L$: if $f_n\to f$ and $\mc Lf_n\to\mc Lf$ in $L^2(\mu)$, then every term in
\eqref{eq:defD} converges.  Now let
$f\in D(\mc L)\cap L^2_0(\mu)$ and choose $\psi_n\in C_c^\infty(\RR^{2d})$ converging to $f$ in the graph norm of $\mc L$.  Fix $\chi\in C_c^\infty(\RR^{2d})$ with $\int\chi\ud\mu=1$ and set
\begin{equation*}
  \phi_n:=\psi_n-\left(\int\psi_n\ud\mu\right)\chi.
\end{equation*}
Then $\phi_n$ is smooth, compactly supported, and mean-zero. We hence conclude
\begin{equation*}
   \phi_n\to f,
  \qquad
  \mc L\phi_n  =\mc L\psi_n-\left(\int\psi_n\ud\mu\right)\mc L\chi
  \to \mc Lf \quad\text{in }L^2(\mu),
\end{equation*}
by $\int\psi_n\ud\mu\to0$ and $\chi\in D(\mc L)$. Passing to the limit in \eqref{eq:core-diss}, using the graph-norm continuity established above, proves \eqref{eq:core-diss} for every $f\in D(\mc L)\cap L^2_0(\mu)$.

We note that $4s_\alpha+(\gamma c_\alpha+C_\alpha)^2 \ge 4\sqrt{s_\alpha}\,(\gamma c_\alpha+C_\alpha)$, and hence
\begin{equation*}
   q_{\alpha,\gamma}  \le \frac{\sqrt{s_\alpha}\,\gamma c_\alpha}{\gamma c_\alpha+C_\alpha}<1.
\end{equation*}
The norm equivalence \eqref{eq:H-equiv} gives
\begin{equation}\label{eq:Hopt}
  \frac{1-q_{\alpha,\gamma}}{2}\|f\|_{L^2(\mu)}^2  \le \mf L_{\alpha,\varepsilon_*}(f)  \le \frac{1+q_{\alpha,\gamma}}{2}\|f\|_{L^2(\mu)}^2.
\end{equation}
Taking $\varepsilon=\varepsilon_*$, by \eqref{eq:core-diss} and the upper bound in \eqref{eq:Hopt}, we have
\begin{equation*}
   \frac{\mathrm d}{\mathrm dt}\mf L_{\alpha,\varepsilon_*}(f(t)) \le
  -2\Lambda_{\alpha,\gamma}\mf L_{\alpha,\varepsilon_*}(f(t)).
\end{equation*}
By Gronwall's lemma, this further implies
\begin{equation*}
     \|f(t)\|_{L^2(\mu)}^2
  \le
  \frac{1+q_{\alpha,\gamma}}{1-q_{\alpha,\gamma}}\,
  \exp\!\left(-2\Lambda_{\alpha,\gamma}t\right)
  \|f_0\|_{L^2(\mu)}^2,
\end{equation*}
which gives \eqref{eq:general-decay} after taking square roots. The estimate extends to arbitrary $f_0\in L^2_0(\mu)$ by density and the strong continuity of the semigroup. This proves \thmref{thm:main}. \qed

\subsection*{Proof of \corref{cor:optimized-shift}} From \eqref{eq:q-lambda}, a direct computation gives
\begin{equation} \label{auxeq:expressgamma}
    \frac{\gamma}{\Lambda_{\alpha,\gamma}} = \frac4{s_\alpha}
  +\left(\frac{C_\alpha}{s_\alpha}\right)^2 +2\gamma\frac{c_\alpha}{s_\alpha} \left(\frac{C_\alpha}{s_\alpha}+2\right) +\gamma^2\left(\frac{c_\alpha}{s_\alpha}\right)^2.
\end{equation}
We also have, by the definition \eqref{eq:main-constants},
\begin{equation*}
     \frac1{s_\alpha}=1+\frac{\alpha}{m},
  \qquad
  \frac{c_\alpha}{s_\alpha}
  =
  \begin{cases}
    \dfrac1{\sqrt m}, & 0\le\alpha\le m,\\[6pt]
    \dfrac{m+\alpha}{2m\sqrt\alpha}, & \alpha\ge m.
  \end{cases}
\end{equation*}
Noting that for $\alpha\ge m$,
\begin{equation*}
   \frac{\mathrm d}{\mathrm d\alpha}  \frac{m+\alpha}{2m\sqrt\alpha}  =\frac{\alpha-m}{4m\alpha^{3/2}}\ge0\,,
\end{equation*}
we obtain that $1/s_\alpha$ is strictly increasing and $c_\alpha/s_\alpha$ is nondecreasing in $\alpha \ge 0$. Moreover, by
\begin{equation*}
      \left(\frac{C_\alpha}{s_\alpha}\right)^2 =\frac2{s_\alpha^2}
   +2K\left(\frac{c_\alpha}{s_\alpha}\right)^2,
\end{equation*}
$C_\alpha/s_\alpha$ is also nondecreasing in $\alpha$. It follows from \eqref{auxeq:expressgamma} that  $\gamma/\Lambda_{\alpha,\gamma}$ is strictly increasing in $\alpha \ge 0$. Thus, for every fixed $\gamma>0$, there holds
\begin{equation*}
  \Lambda_{\alpha,\gamma}<\Lambda_{0,\gamma}\quad(\alpha>0),\qquad   \sup_{\alpha>0}\Lambda_{\alpha,\gamma}  =\lim_{\alpha\downarrow0}\Lambda_{\alpha,\gamma}  =\Lambda_{0,\gamma}.
\end{equation*}
At $\alpha = 0$, we have $s_0=1$, $c_0=1/\sqrt m$, and $C_0=\sqrt{2+2K/m}$, which gives \eqref{eq:shift-optimized-rate}.  The two
limiting regimes in \eqref{eq:shift-optimized-asymptotics} are immediate. Finally, to optimize over $\gamma>0$, we consider $1/\Lambda_{0,\gamma}$ and find
\begin{equation*}
     \frac1{\Lambda_{0,\gamma}} =
  \frac{6+\frac{2K}{m}}{\gamma}
  +\frac2{\sqrt m}\left(2+\sqrt{2+\frac{2K}{m}}\right)
  +\frac\gamma m,
\end{equation*}
which is minimized at $\gamma_*=\sqrt{6m+2K}$. This completes the proof. \qed


\end{document}